\providecommand{\U}[1]{\protect\rule{.1in}{.1in}}
\newtheorem{theorem}{Theorem}
\newtheorem{lemma}[theorem]{Lemma}
\newenvironment{proof}[1][Proof]{\noindent\textbf{#1.} }{\ \rule{0.5em}{0.5em}}
\begin{document}

\title{\textbf{Stabilities for Euler-Poisson Equations with Repulsive Forces in
}$R^{N}$}
\author{M\textsc{anwai Yuen}\thanks{E-mail address: nevetsyuen@hotmail.com }\\\textit{Department of Applied Mathematics, }\\\textit{The Hong Kong Polytechnic University,}\\\textit{Hung Hom, Kowloon, Hong Kong}}
\date{Revised 3-Jan-2010}
\maketitle

\begin{abstract}
This article extends the previous paper in "M.W. Yuen, \textit{Stabilities for
Euler-Poisson Equations in Some Special Dimensions}, J. Math. Anal. Appl.
\textbf{344} (2008), no. 1, 145--156.", from the Euler-Poisson equations for
attractive forces to the repulsive ones in $R^{N}$ $(N\geq2)$. The similar
stabilities of the system are studied. Additionally, we explain that it is
impossible to have the density collapsing solutions with compact support to
the system with repulsive forces for $\gamma>1$.

Key Words: Euler-Poisson Equations, Repulsive Forces, Stabilities, Frictional
Damping, Second Inertia Function, Non-collapsing Solutions

\end{abstract}

\section{Introduction}

The semi-conductor models can be formulated by the isentropic Euler-Poisson
equation with repulsive forces in the following form:
\begin{equation}
\left\{
\begin{array}
[c]{rl}%
{\normalsize \rho}_{t}{\normalsize +\nabla\cdot(\rho u)} & {\normalsize =}%
{\normalsize 0,}\\
{\normalsize (\rho u)}_{t}{\normalsize +\nabla\cdot(\rho u\otimes u)+\nabla
P+\beta\rho u} & {\normalsize =}{\normalsize \rho\nabla\Phi,}\\
{\normalsize \Delta\Phi(t,x)} & {\normalsize =\alpha(N)}{\normalsize \rho,}%
\end{array}
\right.  \label{Euler-Poisson}%
\end{equation}
where $\alpha(N)$ is a constant related to the unit ball in $R^{N}$:
$\alpha(1)=2$; $\alpha(2)=2\pi$; For $N\geq3,$%
\begin{equation}
\alpha(N)=N(N-2)V(N)=N(N-2)\frac{\pi^{N/2}}{\Gamma(N/2+1)},
\end{equation}
where $V(N)$ is the volume of the unit ball in $R^{N}$ and $\Gamma$ is the
Gamma function. As usual, $\rho=\rho(t,x)$ and $u=u(t,x)\in\mathbf{R}^{N}$ are
the density and the velocity respectively. $P=P(\rho)$\ is the pressure and
$\beta\geq0$ is the frictional damping constant. In the above system, the
self-repulsive potential field $\Phi=\Phi(t,x)$\ is determined by the density
$\rho$ through the Poisson equation. The equations (\ref{Euler-Poisson})$_{1}$
and (\ref{Euler-Poisson})$_{2}$ are the compressible Euler equations with
forcing term. The equation (\ref{Euler-Poisson})$_{3}$ is the Poisson equation
through which the potential with repulsive forces is determined by the density
distribution of the electrons themselves. Thus, we called the system
(\ref{Euler-Poisson}) the Euler-Poisson equations with repulsive forces. In
this case, the equations can be viewed as a semiconductor model. See \cite{C},
\cite{Lions} for a detail about the system. For some fixed $K\geq0$, we have a
$\gamma$-law on the pressure $P(\rho)$, i.e.%
\begin{equation}
{\normalsize P}\left(  \rho\right)  {\normalsize =K\rho}^{\gamma}\doteq
\frac{{\normalsize \rho}^{\gamma}}{\gamma},\label{gamma}%
\end{equation}
for $K>0$, which is a common hypothesis. When $K=0$, the pressureless system
can be used as models in plasma physics \cite{BB}. The constant $\gamma
=c_{P}/c_{v}\geq1$, where $c_{P}$ and $c_{v}$\ are the specific heats per unit
mass under constant pressure and constant volume respectively, is the ratio of
the specific heats. Additionally, the fluid is called isothermal if $\gamma=1$.

The Poisson equation (\ref{Euler-Poisson})$_{3}$ can be solved as%
\begin{equation}
{\normalsize \Phi(t,x)=}\int_{R^{N}}G(x-y)\rho(t,y){\normalsize dy,}%
\end{equation}
where $G$ is the Green's function for the Poisson equation in the
$N$-dimensional spaces defined by
\begin{equation}
G(x)\doteq\left\{
\begin{array}
[c]{ll}%
|x|, & N=1;\\
\log|x|, & N=2;\\
\frac{-1}{|x|^{N-2}}, & N\geq3.
\end{array}
\right.
\end{equation}
If we seek solutions in radial symmetry with the radius $r=\left(  \sum
_{i=1}^{N}x_{i}^{2}\right)  ^{1/2}$, the Poisson equation (\ref{Euler-Poisson}%
)$_{3}$ is transformed to%
\begin{equation}
{\normalsize r^{N-1}\Phi}_{rr}\left(  {\normalsize t,x}\right)  +\left(
N-1\right)  r^{N-2}\Phi_{r}{\normalsize =}\alpha\left(  N\right)
{\normalsize \rho r^{N-1},}%
\end{equation}%
\begin{equation}
\Phi_{r}=\frac{\alpha\left(  N\right)  }{r^{N-1}}\int_{0}^{r}\rho
(t,s)s^{N-1}ds.
\end{equation}
We can seek the radial symmetry solutions
\begin{equation}
\rho(t,\vec{x})=\rho(t,r)\text{ and }\vec{u}=\frac{\vec{x}}{r}V(t,r)=:\frac
{\vec{x}}{r}V.
\end{equation}
By standard computation, the Euler-Poisson equations in radial symmetry can be
written in the following form:%
\begin{equation}
\left\{
\begin{array}
[c]{c}%
\rho_{t}+V\rho_{r}+\rho V_{r}+\dfrac{N-1}{r}\rho V=0,\\
\rho\left(  V_{t}+VV_{r}\right)  +P_{r}(\rho)=\rho\Phi_{r}\left(  \rho\right)
.
\end{array}
\right.  \label{eq12345}%
\end{equation}
Perthame discovered the blowup results for $3$-dimensional pressureless system
with repulsive forces \cite{P}. In short, all the results above rely on the
solutions with radial symmetry:
\begin{equation}%
\begin{array}
[c]{rl}%
V_{t}+VV_{r} & {\normalsize =}\frac{\alpha(N)}{r^{N-1}}\int_{0}^{r}%
\rho(t,s)s^{N-1}ds.
\end{array}
\end{equation}
And the Emden ordinary differential equations were deduced on the boundary
point of the solutions with compact support:%
\begin{equation}
\frac{D^{2}R}{Dt^{2}}=\frac{M}{R^{N-1}},\text{ }R(0,R_{0})=R_{0}\geq0,\text{
}\dot{R}(0,R_{0})=0,
\end{equation}
where $\frac{dR}{dt}:=V$ and $M$ is the mass of the solutions, along the
characteristic curve. They showed the blowup results for the $C^{1}$ solutions
of the system (\ref{eq12345}).

Very recently, Yuen showed in \cite{Y3} that the classical non-trivial
solutions $\left(  \rho,V\right)  $ for the Euler or Euler-Poisson equations
with repulsive forces, in radial symmetry, with compact support in $\left[
0,R\right]  $, where $R$ is a positive constant (which is $V(t,0)=0;\rho
(t,r)=0,V(t,r)=0$ for $r\geq R$) and the initial velocity such that:
\begin{equation}
H_{0}=\int_{0}^{R}V_{0}dr>0,
\end{equation}
blow up on or before the finite time $T=2R/H_{0},$ for pressureless fluids
$(K=0)$ or $\gamma>1$. \newline The system with attractive forces were studied
in \cite{DLYY}, \cite{M}, \cite{M1}, \cite{MP}, \cite{Y1}.and \cite{Y2}. This
article extends the previous paper in \cite{Y2}, from the Euler-Poisson
equations for attractive forces with or without frictional damping to the
repulsive ones in $R^{N}$ $(N\geq2)$. The similar stabilities of the system
are studied.

In the last section, we exclude the possibility of collapsing solutions for
this system. The non-existence of collapsing solutions can be shown by the
simple argument for the energy function:%
\begin{equation}%
\begin{array}
[c]{cc}%
E(t)=\int_{\Omega}\left(  \frac{1}{2}\rho\left\vert u\right\vert ^{2}+\frac
{1}{\gamma-1}P-\frac{1}{2}\rho\Phi\right)  dx, & \text{ for }\gamma>1.
\end{array}
\end{equation}

\begin{theorem}
\label{kooko}For the classical solutions with compact support of the
Euler-Poisson equations with repulsive force, (\ref{Euler-Poisson}), in
$R^{N}$ $(N\geq2)$ with $\gamma>1$ or without pressure $(K=0)$, there is no
collapsing phenomenon where part of the density $\rho(t,x)$ collapses to a point.
\end{theorem}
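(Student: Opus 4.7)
The plan is to verify that the energy $E(t)$ is non-increasing along classical compactly supported solutions, so that $E(t)\leq E(0)<\infty$, and then to derive a contradiction by showing that a partial collapse of $\rho$ at a single point $x_{0}$ would force one of the nonnegative summands of $E$ to diverge.

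First I would derive the energy identity. Dotting the momentum equation in (\ref{Euler-Poisson}) with $u$, integrating over $R^{N}$, and using the continuity equation together with the $\gamma$-law (\ref{gamma}) yields in the standard fashion
\begin{equation*}
\frac{d}{dt}\int_{R^{N}}\left(\tfrac{1}{2}\rho|u|^{2}+\tfrac{1}{\gamma-1}P\right)dx+\beta\int_{R^{N}}\rho|u|^{2}\,dx=\int_{R^{N}}\rho\, u\cdot\nabla\Phi\,dx.
\end{equation*}
Inserting $\Phi(t,x)=\int G(x-y)\rho(t,y)\,dy$, exploiting the symmetry $G(x-y)=G(y-x)$, and invoking $\rho_{t}+\nabla\cdot(\rho u)=0$ rewrites the right-hand side as $\tfrac{1}{2}\tfrac{d}{dt}\int\rho\Phi\,dx$. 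Hence $\dot{E}(t)=-\beta\int\rho|u|^{2}\,dx\leq 0$, so $E(t)\leq E(0)$ on the interval of existence.

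I would then argue by contradiction. Suppose that at some $t^{\ast}$ a positive fraction $m_{0}$ of the mass concentrates at a point $x_{0}$, in the sense that along a sequence $t_{n}\to t^{\ast}$ and $r_{n}\to 0$ one has $\int_{B(x_{0},r_{n})}\rho(t_{n},\cdot)\,dx\geq m_{0}$. In the case $\gamma>1$, H\"older's inequality yields
\begin{equation*}
m_{0}\leq\left(\int_{B(x_{0},r_{n})}\rho^{\gamma}\,dx\right)^{1/\gamma}(V(N)\,r_{n}^{N})^{(\gamma-1)/\gamma},
\end{equation*}
so the internal-energy term $\frac{K}{\gamma-1}\int\rho^{\gamma}\,dx\geq C\, m_{0}^{\gamma}/r_{n}^{N(\gamma-1)}\to\infty$, contradicting $E(t_{n})\leq E(0)$. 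In the pressureless case $K=0$ I would instead use the repulsive self-energy $-\tfrac{1}{2}\int\rho\Phi\,dx$. For $N\geq 3$ this equals $\tfrac{1}{2}\iint|x-y|^{2-N}\rho(x)\rho(y)\,dy\,dx$, which, after restricting to $B(x_{0},r_{n})\times B(x_{0},r_{n})$, is bounded below by $m_{0}^{2}/(2r_{n})^{N-2}\to\infty$. For $N=2$, shifting the kernel to $\tilde{G}(x)=\log(|x|/(2R))$, where $R$ bounds the support of $\rho$ (a constant shift that does not affect the dynamics), renders the integrand nonpositive on $\mathrm{supp}\,\rho\times\mathrm{supp}\,\rho$, and the analogous restriction yields a lower bound of order $m_{0}^{2}\log(R/r_{n})\to\infty$.

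The main technical issue I foresee is making the phrase \emph{partial collapse to a point} precise, since the theorem statement leaves it slightly informal; the argument needs only that $\int_{B(x_{0},r)}\rho\,dx$ stays bounded below by a positive constant along some sequence $(t_{n},r_{n})$ with $t_{n}\to t^{\ast}$ and $r_{n}\to 0$. Once this is fixed, the three ingredients above --- energy dissipation, a H\"older lower bound on $\int\rho^{\gamma}$ in the pressured case, and blow-up of the self-interaction integral in the pressureless case --- combine to yield the theorem.
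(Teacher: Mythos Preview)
Your argument is correct and shares the paper's overall skeleton: use the energy dissipation $\dot{E}\leq 0$ (the paper records this as Lemma~\ref{ChangeRateofEnergy}) to get $E(t)\leq E(0)<\infty$, and then show that collapse forces a nonnegative summand of $E$ to diverge. Where you differ is in the choice of summand in the $\gamma>1$ case. The paper bounds $E$ from below by the potential self-energy $-\tfrac{1}{2}\int\rho\Phi\,dx$ in \emph{all} cases and argues heuristically, via $\delta$-functions, that this integral becomes infinite at a collapse point. You instead exploit the internal energy $\tfrac{K}{\gamma-1}\int\rho^{\gamma}\,dx$ for $\gamma>1$ through a clean H\"older bound, and reserve the potential self-energy only for the pressureless case $K=0$. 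Your route has two advantages: the H\"older step is fully rigorous and sidesteps the sign ambiguity of the logarithmic kernel in $N=2$; and your formulation of ``partial collapse'' via $\int_{B(x_{0},r_{n})}\rho\geq m_{0}$ along $(t_{n},r_{n})\to(t^{\ast},0)$ makes precise what the paper leaves informal. Conversely, the paper's argument is uniform in $\gamma$ and highlights that it is specifically the repulsive sign of the interaction that obstructs concentration. One small point to watch in your $N=2$, $K=0$ step: the bound $R$ on $\operatorname{supp}\rho$ used to shift the kernel must be chosen uniformly for the sequence $t_{n}$; this is fine provided $t^{\ast}<\infty$ and the support stays bounded on $[0,t^{\ast})$, which you should state explicitly.
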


\section{Stabilities}

In this section, we study the stabilities of the Euler-Poisson equations with
repulsive forces, (\ref{Euler-Poisson}), in $R^{N}(N\geq2)$. The total energy
can be defined by,%
\begin{equation}
\left\{
\begin{array}
[c]{c}%
\begin{array}
[c]{cc}%
E(t)=\int_{\Omega}\left(  \frac{1}{2}\rho\left\vert u\right\vert ^{2}+\frac
{1}{\gamma-1}P-\frac{1}{2}\rho\Phi\right)  dx, & \text{ for }\gamma>1,
\end{array}
\\%
\begin{array}
[c]{cc}%
E(t)=\int_{\Omega}\left(  \frac{1}{2}\rho\left\vert u\right\vert ^{2}-\frac
{1}{2}\rho\Phi\right)  dx, & \text{ for without pressure.}%
\end{array}
\end{array}
\right.  \label{energy1}%
\end{equation}
For the system, we have the energy estimate:

\begin{lemma}
\label{ChangeRateofEnergy}For the Euler-Poisson equations,
(\ref{Euler-Poisson}), suppose the solutions $(\rho,u)$ have compact support
in $\Omega$. We have,
\begin{equation}
\overset{\cdot}{E}(t)=-\beta\int_{\Omega}\rho\left\vert u\right\vert
^{2}dx\leq0.
\end{equation}

\end{lemma}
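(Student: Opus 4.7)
The plan is to differentiate the three pieces of $E(t)$ separately, substitute the Euler-Poisson system (\ref{Euler-Poisson}), and observe that the pressure-work and electrostatic-work terms cancel in pairs, leaving only the friction contribution. Starting with the kinetic energy, I would combine the continuity equation with the momentum equation, written as $\rho u_{t}+\rho(u\cdot\nabla)u=-\nabla P-\beta\rho u+\rho\nabla\Phi$, to obtain the pointwise identity
\[
\partial_{t}\!\left(\tfrac{1}{2}\rho|u|^{2}\right)+\nabla\cdot\!\left(\tfrac{1}{2}\rho|u|^{2}u\right)=-u\cdot\nabla P-\beta\rho|u|^{2}+\rho u\cdot\nabla\Phi.
\]
Integrating over $\Omega$, using compact support to discard the divergence flux, and integrating by parts on $u\cdot\nabla P$, one gets $\frac{d}{dt}\int\tfrac{1}{2}\rho|u|^{2}dx=\int P\,\nabla\cdot u\,dx-\beta\int\rho|u|^{2}dx+\int\rho u\cdot\nabla\Phi\,dx$.

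For the internal energy, I would combine $P=K\rho^{\gamma}$ with the continuity equation to derive the transport-type identity $\partial_{t}P+\nabla\cdot(Pu)+(\gamma-1)P\,\nabla\cdot u=0$. Dividing by $\gamma-1$ and integrating over $\Omega$, the divergence term vanishes by compact support, giving $\frac{d}{dt}\int\frac{P}{\gamma-1}dx=-\int P\,\nabla\cdot u\,dx$, which exactly cancels the pressure-work term produced by the kinetic energy calculation.

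For the potential energy, the key is the symmetry $G(x-y)=G(y-x)$ of the Green's function in $\Phi(t,x)=\int G(x-y)\rho(t,y)\,dy$. Fubini then gives $\int\rho\,\Phi_{t}\,dx=\int\rho_{t}\,\Phi\,dx$, so
\[
\frac{d}{dt}\!\left(-\tfrac{1}{2}\int\rho\Phi\,dx\right)=-\int\rho_{t}\Phi\,dx=\int\nabla\cdot(\rho u)\,\Phi\,dx=-\int\rho u\cdot\nabla\Phi\,dx,
\]
which cancels the electrostatic-work term from the kinetic energy. Summing the three contributions gives $\dot{E}(t)=-\beta\int\rho|u|^{2}dx\leq 0$.

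The main technical point is justifying the symmetry/Fubini step and the integration by parts against $\Phi$: both rely on the compact support of $\rho$ together with the local integrability of $G$ in $R^{N}$ for $N\geq 2$, and on the fact that the Newtonian-type convolution $\Phi$ and its gradient are regular enough on that support for the boundary terms to drop, which are classical potential-theoretic facts. The pressureless case $K=0$ is immediate by omitting the internal energy piece from $E(t)$: the kinetic computation no longer produces a $\int P\,\nabla\cdot u$ contribution, so only the Poisson cancellation and the friction term survive.
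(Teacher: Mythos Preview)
Your argument is correct and is exactly the standard energy-method computation one expects here. The paper itself does not supply a proof of this lemma; it is stated as a known energy estimate and then used, so there is no alternative approach to compare against.
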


Initially, Sideris used the second inertia function
\begin{equation}
{\normalsize H(t)=}\int_{\Omega}{\normalsize \rho(t,x)}\left\vert
{\normalsize x}\right\vert ^{2}{\normalsize dx,} \label{eq21}%
\end{equation}
to study instability results for the Euler equations \cite{SI}. After that in
\cite{MP}, the instability result of the Euler-Poisson equations with
attractive forces, in radial symmetry, was obtained for $\gamma\geq4/3$ and
$N=3$. For the corresponding cases in $R^{N}$, with non-radial symmetry, were
studied \cite{DLYY}, \cite{Y2}. By the standard computation for energy method,
it is clear to have the following lemma:

\begin{lemma}
\label{lem:secondinter}Consider $(\rho,u)$ is a solution with compact support
in $\Omega$ \ for the Euler-Poisson equations, (\ref{Euler-Poisson}) with
$\beta=0$. We have%
\begin{equation}
\left\{
\begin{array}
[c]{cc}%
\overset{\cdot\cdot}{H}(t)=2\int_{\Omega}\left[  (\rho\left\vert u\right\vert
^{2}+NP)dx-\frac{N-2}{2}\rho\Phi\right]  dx, & \text{for }N\geq3;\\
\overset{\cdot\cdot}{H}(t)=2\int_{\Omega}(\rho\left\vert u\right\vert
^{2}+2P)dx+M^{2}, & \text{for }N=2.
\end{array}
\right.  \label{eq22}%
\end{equation}

\end{lemma}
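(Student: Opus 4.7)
The plan is to differentiate $H(t)$ twice, use the continuity and momentum equations to rewrite the resulting expression in terms of $|u|^2$, $P$ and $\Phi$, and finally convert the remaining term $\int \rho\, x\cdot\nabla\Phi\, dx$ into the desired form by a symmetrization of the Green's-function double integral.

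\textbf{Step 1: The first derivative.} Differentiating under the integral and using the continuity equation $\rho_{t}=-\nabla\cdot(\rho u)$, then integrating by parts (which is permitted since $\rho$ has compact support in $\Omega$), I obtain
\begin{equation*}
\dot H(t)=-\int_{\Omega}\nabla\cdot(\rho u)\,|x|^{2}dx
=\int_{\Omega}\rho\,u\cdot\nabla(|x|^{2})\,dx
=2\int_{\Omega}\rho\,(u\cdot x)\,dx.
\end{equation*}

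\textbf{Step 2: The second derivative.} Differentiating once more and using the momentum equation $(\rho u)_{t}=-\nabla\cdot(\rho u\otimes u)-\nabla P+\rho\nabla\Phi$ (with $\beta=0$), compact support lets me integrate by parts without boundary terms. The convective term yields $\int\rho u_{i}u_{j}\partial_{j}x_{i}\,dx=\int\rho|u|^{2}dx$, while $-\int\nabla P\cdot x\,dx=N\int P\,dx$ since $\nabla\cdot x=N$. Thus
\begin{equation*}
\ddot H(t)=2\int_{\Omega}\rho|u|^{2}\,dx+2N\int_{\Omega}P\,dx+2\int_{\Omega}\rho\, x\cdot\nabla\Phi\,dx.
\end{equation*}

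\textbf{Step 3: The potential term via symmetrization.} This is where the dimensional dichotomy enters and is the main technical step. Writing $\Phi(x)=\int G(x-y)\rho(y)\,dy$ with the Green's functions stated in the introduction, I have $\int\rho\, x\cdot\nabla\Phi\,dx=\iint\rho(x)\rho(y)\, x\cdot\nabla_{x}G(x-y)\,dx\,dy$. Since $G$ is radial, $\nabla_{y}G(y-x)=-\nabla_{x}G(x-y)$; swapping the dummy variables $x\leftrightarrow y$ therefore gives the same integral with $x$ replaced by $-y$. Averaging the two expressions collapses the integrand to $\tfrac{1}{2}\iint\rho(x)\rho(y)\,(x-y)\cdot\nabla_{x}G(x-y)\,dx\,dy$. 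For $N\geq 3$, $G(z)=-|z|^{-(N-2)}$ gives $z\cdot\nabla G(z)=(N-2)|z|^{-(N-2)}=-(N-2)G(z)$, so the double integral equals $-\tfrac{N-2}{2}\int\rho\,\Phi\,dx$. For $N=2$, $G(z)=\log|z|$ gives $z\cdot\nabla G(z)=1$, and the double integral collapses to $\tfrac{1}{2}\iint\rho(x)\rho(y)\,dx\,dy=\tfrac{1}{2}M^{2}$.

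\textbf{Step 4: Assembly.} Substituting these two evaluations back into the expression from Step 2 produces
\begin{equation*}
\ddot H(t)=2\int_{\Omega}\bigl[\rho|u|^{2}+NP\bigr]dx-(N-2)\int_{\Omega}\rho\Phi\,dx\quad(N\geq 3),
\end{equation*}
and
\begin{equation*}
\ddot H(t)=2\int_{\Omega}\bigl[\rho|u|^{2}+2P\bigr]dx+M^{2}\quad(N=2),
\end{equation*}
matching the stated formulas. The only nonroutine step is Step 3; the obstacle there is keeping the sign conventions of $G$, $\alpha(N)$, and the repulsive $\Phi$ consistent so that the overall sign in front of $\int\rho\Phi\,dx$ (now \emph{negative}, opposite to the attractive case) is correctly tracked. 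Once the symmetrized form $(x-y)\cdot\nabla_{x}G(x-y)$ is obtained, the dimensional cases follow by direct differentiation of $G$.
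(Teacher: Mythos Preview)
Your argument is correct and is exactly the ``standard computation'' the paper alludes to without writing out: the paper simply states that by the standard energy-method computation the lemma follows, citing earlier works, and gives no details of its own. Your Steps~1--2 (differentiate $H$, use continuity and momentum, integrate by parts on the convective and pressure terms) and Step~3 (symmetrize the double integral $\iint\rho(x)\rho(y)\,x\cdot\nabla_{x}G(x-y)\,dx\,dy$ to reduce it to $(x-y)\cdot\nabla_{x}G(x-y)$, then evaluate using the explicit Green's functions) reproduce precisely this standard route, and the sign in front of $\int\rho\Phi\,dx$ is tracked correctly for the repulsive case.
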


By applying the above lemma, we could have:

\begin{theorem}
Suppose $(\rho,u)$ is a global classical solution in the Euler-Poisson
equations, (\ref{Euler-Poisson}) with $\gamma>1$, without frictional damping
$(\beta=0)$. We have\newline(1)for $N\geq3,$%
\begin{equation}
\underset{t\rightarrow\infty}{\lim}\inf\frac{R(t)}{t}\geq\left[
\frac{{\normalsize \inf(2,N(\gamma-1),N-2)E}}{M}\right]  ^{1/2};
\end{equation}
(2)for $N=2,$\newline%
\begin{equation}
\underset{t\rightarrow\infty}{\lim}\inf\frac{R(t)}{t}\geq\sqrt{\frac{1}{2M}};
\end{equation}
(3)for $N\geq2,$
\begin{equation}
\underset{t\rightarrow\infty}{\lim}\inf\frac{R(t)}{t}{\normalsize \geq}\left[
\frac{NKM^{\gamma-1}}{\left\vert \Omega\right\vert ^{(\gamma-1)}}\right]
^{1/2},
\end{equation}
with $R(t)=\max_{x\in\Omega(t)}\left\{  \left\vert x\right\vert \right\}  $.
Here%
\begin{equation}
M=\int_{\Omega}\rho(t,x)dx,
\end{equation}
is the total mass which is constant for any classical solution and $\left\vert
\Omega\right\vert $ is the fixed volume of $\Omega$.
\end{theorem}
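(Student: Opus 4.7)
The plan is to use the classical virial argument: combine the second-inertia identity of Lemma~\ref{lem:secondinter} with conservation of $E$ (which holds since $\beta=0$, by Lemma~\ref{ChangeRateofEnergy}) to derive a bound $\ddot H(t)\geq 2C$ with $C>0$ a time-independent constant, integrate twice in $t$ to get $H(t)\geq Ct^2+\dot H(0)\,t+H(0)$, and then compare with the trivial estimate
\begin{equation}
H(t)=\int_{\Omega(t)}\rho|x|^2\,dx\leq R(t)^2\int_{\Omega(t)}\rho\,dx=R(t)^2 M.
\end{equation}
Dividing by $Mt^2$ and sending $t\to\infty$ yields $\liminf_{t\to\infty}R(t)/t\geq\sqrt{C/M}$, so the whole proof reduces to identifying the correct constant $C$ in each of the three parts.

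For (1) I would compare
\begin{equation}
\tfrac12\ddot H=\int\rho|u|^2\,dx+N\int P\,dx-\tfrac{N-2}{2}\int\rho\Phi\,dx
\end{equation}
term-by-term with $E=\int(\tfrac12\rho|u|^2+\tfrac{1}{\gamma-1}P-\tfrac12\rho\Phi)\,dx$. The three ratios of matching pieces are $2$, $N(\gamma-1)$ and $N-2$, all nonnegative once one uses $-\int\rho\Phi\,dx\geq 0$ (which holds in $N\geq 3$ because the repulsive Green function $G=-|x|^{-(N-2)}$ is nonpositive, so $\Phi\leq 0$ for $\rho\geq 0$). Taking $a=\min\{2,N(\gamma-1),N-2\}$ produces $\ddot H\geq 2aE$, so $C=aE$. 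For (2) the $N=2$ identity in Lemma~\ref{lem:secondinter} already carries an explicit positive $M^2$ on the right, so simply dropping the nonnegative pieces $\rho|u|^2+2P$ yields $\ddot H\geq M^2$, i.e.\ $C=M^2/2$. For (3) I would retain only the pressure term to get $\ddot H\geq 2N\int P\,dx=2NK\int\rho^\gamma\,dx$; a H\"older inequality on the fixed ambient domain of volume $|\Omega|$ together with mass conservation gives $\int\rho^\gamma\,dx\geq M^\gamma/|\Omega|^{\gamma-1}$, producing $C=NKM^\gamma/|\Omega|^{\gamma-1}$.

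The main technical point is checking the sign conditions that make each dropped piece nonnegative. The repulsive self-potential contribution is usable only when $N\geq 3$, which is precisely why the case $N=2$ demands the separate virial identity of Lemma~\ref{lem:secondinter}: the coefficient $N-2$ vanishes and the geometric remainder $M^2$ takes its place. One also needs $E>0$ in part~(1) for the bound to be non-vacuous, but this is automatic for non-trivial solutions with $\gamma>1$, since the pressure piece $\tfrac{1}{\gamma-1}\int P\,dx$ and the repulsive self-energy piece $-\tfrac12\int\rho\Phi\,dx$ are both strictly positive. Finally, the H\"older step in part~(3) implicitly requires $\Omega$ to be a single bounded ambient domain containing every $\Omega(t)$, as the statement's phrase ``fixed volume of $\Omega$'' suggests; without this, $|\Omega(t)|$ could itself grow in time and the resulting lower bound would become vacuous.
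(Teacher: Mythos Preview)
Your proposal is correct and follows essentially the same approach as the paper: bound $\ddot H$ below by a positive constant via Lemma~\ref{lem:secondinter} (using the term-by-term comparison with $E$ for part~(1), dropping to $M^2$ for part~(2), and keeping only the pressure and applying H\"older for part~(3)), integrate twice, and compare with $H(t)\le R(t)^2 M$. Your added remarks on the sign of $-\int\rho\Phi\,dx$ for $N\ge3$ and on the need for a fixed ambient $\Omega$ in the H\"older step make explicit points the paper leaves implicit, but the argument is the same.
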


\begin{proof}
(1)For $N\geq3,$ we have the positive energy function $E\geq0$. We can get
from Lemma \ref{lem:secondinter},
\begin{equation}
\overset{\cdot\cdot}{H}(t)=2\left\{  \int_{\Omega}\left[  \rho\left\vert
u\right\vert ^{2}+NP\right]  dx-\frac{N-2}{2}\int_{\Omega}\rho\Phi dx\right\}
\geq2\inf(2,N(\gamma-1),N-2)E.
\end{equation}
That is%
\begin{equation}
{\normalsize H(t)\geq H(0)+}\overset{\cdot}{H}{\normalsize (0)t+\inf
(2,N(\gamma-1),N-2)Et}^{2}.
\end{equation}
On the other hand, we obtain,%
\begin{equation}
{\normalsize H(0)+}\overset{\cdot}{H}{\normalsize (0)t+\inf(2,N(\gamma
-1),N-2)Et}^{2}{\normalsize \leq H}\left(  t\right)  {\normalsize \leq
R(t)}^{2}{\normalsize M.}%
\end{equation}
That is,%
\begin{equation}
O(\frac{1}{t})+{\normalsize \inf(2,N(\gamma-1),N-2)E\leq}\frac{R(t)^{2}%
M}{t^{2}},
\end{equation}%
\begin{equation}
\underset{t\rightarrow\infty}{\lim}\inf\frac{R(t)}{t}\geq\left[
\frac{{\normalsize \inf(2,N(\gamma-1),N-2)E}}{M}\right]  ^{1/2}.
\end{equation}
For $N=2,$ we have%
\begin{equation}
\overset{\cdot\cdot}{H}(t)=2\int_{\Omega}(\rho\left\vert u\right\vert
^{2}+2P)dx+M^{2}\geq M^{2},
\end{equation}%
\begin{equation}
\frac{M^{2}}{2}t^{2}+C_{0}t+C_{1}\leq H(t)\leq{\normalsize R(t)}%
^{2}{\normalsize M,}%
\end{equation}%
\begin{equation}
\underset{t\rightarrow\infty}{\lim}\inf\frac{R(t)}{t}\geq\sqrt{\frac{1}{2M}}.
\end{equation}
For $N\geq2$, we obtain
\begin{equation}
{\normalsize M=}\int_{\Omega}{\normalsize \rho dx\leq}\left(  \int_{\Omega
}\rho^{\gamma}dx\right)  ^{1/\gamma}\left\vert \Omega\right\vert
^{(\gamma-1)/\gamma}, \label{pressureinequaltiy}%
\end{equation}
and
\begin{equation}
\overset{\cdot\cdot}{H}(t)=2\left\{  \int_{\Omega}\left[  \rho\left\vert
u\right\vert ^{2}+NP\right]  dx-\frac{N-2}{2}\int_{\Omega}\rho\Phi dx\right\}
\geq2\int_{\Omega}NPdx.
\end{equation}
From the inequality (\ref{pressureinequaltiy}), it is clear to have%
\begin{equation}
\overset{\cdot\cdot}{H}(t){\normalsize \geq}2NK\left\vert \Omega\right\vert
^{1-\gamma}{\normalsize M}^{\gamma}{\normalsize >0,}%
\end{equation}%
\begin{equation}
{\normalsize H(0)+}\overset{\cdot}{H}(0){\normalsize t+NK}\left\vert
\Omega\right\vert ^{1-\gamma}{\normalsize M}^{\gamma}{\normalsize t}%
^{2}{\normalsize \leq H(t)\leq R(t)}^{2}{\normalsize M,}%
\end{equation}%
\begin{equation}
{\normalsize O(}\frac{1}{t}{\normalsize )+}NK\left\vert \Omega\right\vert
^{1-\gamma}{\normalsize M}^{\gamma}{\normalsize \leq}\frac{R(t)^{2}M}{t^{2}}.
\end{equation}
This gives%
\begin{equation}
\underset{t\rightarrow\infty}{\lim}\inf\frac{R(t)}{t}{\normalsize \geq}\left[
\frac{NKM^{\gamma-1}}{\left\vert \Omega\right\vert ^{(\gamma-1)}}\right]
^{1/2}.
\end{equation}
The proof is completed.
\end{proof}

By applying the similar method, in the above section to the system,
(\ref{Euler-Poisson}), with frictional damping constant $(\beta>0)$, the below
theorem is followed:

\begin{theorem}
Suppose $(\rho,u)$ is a global classical solution with compact support in the
system, (\ref{Euler-Poisson}), with frictional damping $(\beta>0)$. We have
for $N\geq2,$%
\begin{equation}
\underset{t\rightarrow\infty}{\lim}\inf\frac{R(t)}{t^{1/2}}{\normalsize \geq
}\left(  \frac{2\beta NKM^{\gamma-1}}{\left\vert \Omega\right\vert
^{(\gamma-1)}}\right)  ^{1/2},
\end{equation}
with $R(t)=\max_{x\in\Omega(t)}\left\{  \left\vert x\right\vert \right\}  $.
\end{theorem}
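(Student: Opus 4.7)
The plan is to adapt the second-inertia-function argument of the preceding theorem to account for the damping term $\beta\rho u$ in the momentum equation. With $H(t)=\int_{\Omega}\rho|x|^{2}\,dx$, the continuity equation gives $\dot{H}(t)=2\int_{\Omega}\rho u\cdot x\,dx$, and differentiating once more while invoking the damped momentum equation reproduces the identity of Lemma~\ref{lem:secondinter} with one additional contribution, namely $-2\beta\int_{\Omega}\rho u\cdot x\,dx=-\beta\dot{H}(t)$, coming from the $\beta\rho u$ term in (\ref{Euler-Poisson})$_{2}$. Discarding the non-negative kinetic term $\rho|u|^{2}$, discarding the non-negative potential term $-(N-2)\int\rho\Phi\,dx$ (non-negative in the repulsive case because the Green's function $-|x|^{-(N-2)}$ forces $\Phi\leq 0$ for $N\geq 3$, while the coefficient vanishes when $N=2$), and invoking the H\"{o}lder-type bound (\ref{pressureinequaltiy}) on the pressure, one arrives, uniformly in $N\geq 2$, at
\[
\ddot{H}(t)+\beta\dot{H}(t)\geq 2NK\,|\Omega|^{1-\gamma}M^{\gamma}=:C.
\]

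The next step treats this as a linear first-order differential inequality for $\dot{H}(t)$. Multiplying by the integrating factor $e^{\beta t}$ and integrating on $[0,t]$ yields
\[
\dot{H}(t)\geq\frac{C}{\beta}\bigl(1-e^{-\beta t}\bigr)+\dot{H}(0)\,e^{-\beta t},
\]
and in particular $\liminf_{t\to\infty}\dot{H}(t)\geq C/\beta$. A second integration on $[0,t]$ then produces a linear-in-$t$ lower bound $H(t)\geq(C/\beta)\,t+O(1)$ as $t\to\infty$. Since the compact-support hypothesis forces $H(t)\leq R(t)^{2}M$, dividing by $t$ and extracting the liminf delivers the asserted asymptotic lower bound on $R(t)/t^{1/2}$.

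The only genuinely new ingredient relative to the undamped proof is the appearance of the $\beta\dot{H}$ term in the second-inertia identity, together with its removal via the integrating-factor trick; every other estimate (the sign of $\Phi$, the H\"{o}lder bound on $M$, and the pointwise control $H\leq R^{2}M$) carries over verbatim from the previous theorem. I expect the main obstacle to be purely organizational: carefully propagating the constant $C$ through the two integrations so as to recover the precise asymptotic coefficient claimed in the theorem statement.
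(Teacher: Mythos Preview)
Your argument is correct and matches the paper's approach exactly: derive $\ddot{H}+\beta\dot{H}\geq C:=2NK|\Omega|^{1-\gamma}M^{\gamma}$ from the damped second-inertia identity, integrate the resulting first-order inequality for $\dot{H}$ via the factor $e^{\beta t}$, integrate once more to get a linear-in-$t$ lower bound on $H$, and compare with $H\leq R^{2}M$. Regarding your worry about tracking the constant, your computation in fact yields the coefficient $\bigl(2NKM^{\gamma-1}/(\beta\,|\Omega|^{\gamma-1})\bigr)^{1/2}$ rather than the stated one with $2\beta$; this discrepancy is a typo in the paper itself (its displayed identity reads $\ddot{H}+\tfrac{1}{\beta}\dot{H}$ where $\ddot{H}+\beta\dot{H}$ is meant, and the error then propagates to the final constant).
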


\begin{proof}
For $N\geq2$, we get,%
\begin{equation}
\overset{\cdot}{H}(t)=\int_{\Omega}2x\rho udx,
\end{equation}
and%
\begin{equation}
\overset{\cdot\cdot}{H}(t)=2\int_{\Omega}x\left[  -\nabla\cdot\left(  \rho
u\otimes u\right)  -\nabla P+\rho\nabla\Phi-\beta\rho u\right]  dx.
\end{equation}
Additionally, it can be arranged as the following:
\begin{align}
\overset{\cdot\cdot}{H}(t)  &  =2\int_{\Omega}\left[  x-\nabla\cdot\left(
\rho u\otimes u\right)  -\nabla P+\rho\nabla\Phi\right]  dx-\beta
\overset{\cdot}{H}(t),\\
\overset{\cdot\cdot}{H}(t)+\frac{1}{\beta}\overset{\cdot}{H}(t)  &  =2\left\{
\int_{\Omega}\left[  \rho\left\vert u\right\vert ^{2}+NP\right]  dx-\frac
{N-2}{2}\int_{\Omega}\rho\Phi dx\right\}  \geq2\int NPdx\geq2NK\left\vert
\Omega\right\vert ^{1-\gamma}{\normalsize M}^{\gamma}>0.
\end{align}
Therefore, we are able to obtain the inequality,%
\begin{equation}
C_{3}{\normalsize +}C_{4}e^{-\beta t}{\normalsize +}2\beta NK\left\vert
\Omega\right\vert ^{1-\gamma}{\normalsize M}^{\gamma}t{\normalsize \leq
H(t)\leq R(t)}^{2}{\normalsize M,} \label{ff1}%
\end{equation}%
\begin{equation}
{\normalsize O(}\frac{1}{t}{\normalsize )+2\beta}NK\left\vert \Omega
\right\vert ^{1-\gamma}{\normalsize M}^{\gamma}{\normalsize \leq}%
\frac{R(t)^{2}M}{t}.
\end{equation}
This gives%
\begin{equation}
\underset{t\rightarrow\infty}{\lim}\inf\frac{R(t)}{t^{1/2}}{\normalsize \geq
}\left(  \frac{2\beta NKM^{\gamma-1}}{\left\vert \Omega\right\vert
^{(\gamma-1)}}\right)  ^{1/2}.
\end{equation}
The proof is completed.
\end{proof}

\section{Non-existence of Collapsing Solution}

In this section, we explain the idea that there is no possibility to have a
density collapsing solution, with compact support for the Euler-Poisson
equations with repulsive forces:

We restate their energy for $\gamma>1$ or the pressureless fluids is:
\begin{equation}
E(t)=\int_{\Omega}\left(  \frac{1}{2}\rho\left\vert u\right\vert ^{2}+\frac
{1}{\gamma-1}P-\frac{1}{2}\rho\Phi\right)  dx\geq-\int_{\Omega}\frac{1}{2}%
\rho\Phi dx.
\end{equation}
When a $\delta$-shock exists for the density function $\rho(t,x)$, the
potential energy function, with $N\geq3$, becomes to be infinite:%
\begin{equation}
-\int_{\Omega}\rho\Phi dx=\int_{\Omega}\rho(t,x)\int_{\Omega}\frac{\rho
(t,y)}{\left\vert x-y\right\vert ^{N-2}}dydx=\underset{\epsilon\rightarrow
0^{+}}{\lim}\int_{\Omega}\delta(t,x)\int_{\Omega}\frac{\delta(t,y)}%
{\epsilon^{N-2}}dydx=\infty\text{.}%
\end{equation}
With $N=2$, the situation is similar:%
\begin{equation}
-\int_{\Omega}\rho\Phi dx=-\int_{\Omega}\delta(t,x)\int_{\Omega}\delta
(t,y)\ln\left\vert x-y\right\vert dydx=\infty.
\end{equation}
Therefore, the total energy of the $\delta$-shock density solutions must be
infinite. However, for the classical solutions with compact support, the
initial energy is finite. By the energy estimate in Lemma
\ref{ChangeRateofEnergy}, we have,%
\begin{equation}
E(t)\leq E(0).
\end{equation}
If the total energy is finite, it is impossible to obtain the density
collapsing solutions. It is clear to have Theorem \ref{kooko}.

\end{document}